\documentclass{amsart}

\usepackage{amsmath,amssymb,amsthm,a4wide}
\setlength\parindent{0pt}

\usepackage{graphicx,tikz}

\newtheorem*{thm}{Theorem}

\newtheorem{lemma}{Lemma}

\theoremstyle{definition}

\theoremstyle{remark}

\begin{document}

\title[]{ Quantitative Homogenization and\\ convergence of moving averages }
\keywords{Homogenization, Quantitative Homogenization, Feynman-Kac}
\subjclass[2010]{35B27} 
\thanks{This work is supported by the NSF (DMS-1763179) and the Alfred P. Sloan Foundation. Part of this work was
done at PCMI, the author is grateful for its hospitality.}
\author[]{Stefan Steinerberger}
\address{Department of Mathematics, Yale University, New Haven, CT 06511, USA}
\email{stefan.steinerberger@yale.edu}

\begin{abstract} We study homogenization it its most basic form
$$-\left(a\left(\frac{x}{\varepsilon}\right) u_{\varepsilon}'(x)\right)' = f(x) \quad \mbox{for} ~x \in (0,1),$$
where $a(\cdot)$ is a positive $1-$periodic continuous function, $f$ is smooth and $u_{\varepsilon}$ is subjected to Dirichlet boundary conditions.
Classically, there is a homogenized equation with $a(\cdot)$ replaced by a constant coefficient $\overline{a} > 0$
whose solution $u$ satisfies $\|u-u_{\varepsilon}\|_{L^{\infty}} \lesssim \varepsilon$. We show that local averages can result in faster convergence:
for example, if $a(x) = a(1-x)$, then for $x \in (\varepsilon, 1-\varepsilon)$
$$ \left| \frac{1}{\varepsilon} \int_{x-\varepsilon/2}^{x+\varepsilon/2}{ u_{\varepsilon}(y) dy} - u(x) \right| \lesssim_{a, f}  \varepsilon^2.$$
If the condition on $a(\cdot)$ is not satisfied, then subtracting an explicitly given linear function (depending on $a(\cdot),f$) results in the same bound. Moreover, for certain right-hand sides $f$ we have the result for all $a(\cdot)$, which seems like an interesting phenomenon.
 We also describe another approach to quantitative homogenization problems and illustrate it on the same example.
\end{abstract}

\maketitle

\section{Introduction and results}

\subsection{Introduction.} 
We study the basic ordinary differential equation
\begin{align*}
-\left(a\left(\frac{x}{\varepsilon}\right) u_{\varepsilon}'(x)\right)' &= f(x) \quad \mbox{for} ~x \in (0,1)\\
u_{\varepsilon}(0) = 0& = u_{\varepsilon}(1)
\end{align*}
where $a:\mathbb{R} \rightarrow \mathbb{R}_{+}$ is a strictly positive function with period 1, the Dirichlet
boundary conditions $u_{\varepsilon}(0) = 0 = u_{\varepsilon}(1)$ are prescribed, and $f$ is assumed to be smooth.
It is classical \cite{avi, ben, jikov, papa, pap} that $u_{\varepsilon}$ converges to a limiting function $u$ as $\varepsilon \rightarrow 0^+$, where
$u$ solves the homogenized equation
\begin{align*}
-\left(a u_{}'(x)\right)' &= f(x) \quad \mbox{for} ~x \in (0,1)\\
u_{}(0) = 0& = u_{}(1) 
\end{align*}
and the constant $a > 0$ is given by
$$ a = \left( \int_{0}^{1}{\frac{1}{a(x)} dx}\right)^{-1}.$$
Moreover, convergence occurs with a linear rate as $\varepsilon \rightarrow 0^+$ 
$$ \| u_{\varepsilon}(x) - u(x)\|_{L^{\infty}} \lesssim_{a(x), f} \varepsilon$$
and it is not difficult to see that this estimate is sharp.  It is often the case that local averages are smoother objects and we show that a similar phenomenon occurs here: local averages can improve convergence by an order of magnitude. This can also be motivated by a formal multiple scale expansion: one would expect the second term to have mean value 0 and to oscillate at scale $\sim \varepsilon$ suggesting that such a result may hold.

\subsection{The Result} Our result shows that local averages converge an entire order faster whenever the coefficient $a(\cdot)$ satisfies a suitable condition. We show that, among other things, $a(x) = a(1-x)$ implies that
$$ \left| \frac{1}{\varepsilon} \int_{x-\varepsilon/2}^{x+\varepsilon/2}{ u_{\varepsilon}(y) dy} - u(x) \right| \lesssim_{a, f}  \varepsilon^2.$$
The result is not generally true and there is an explicitly given linear function $\ell_{\varepsilon}$
that has to be corrected for first to reach the same order of convergence.

\begin{thm}  Let $u_{\varepsilon}$ and $u$ be defined as above. There exists an affine corrector $\ell_{\varepsilon}(x)$ given by
 \begin{align*}
\ell_{\varepsilon}(x) = \varepsilon  \left(\int_{0}^{1}{f(x) dx}\right) \left( \int_{0}^{1}{ a(y)^{-1}(y-\frac12)dy}\right) x +  \varepsilon\left(\int_{0}^{1}{ \int_{0}^{y}{f(z)dz} dy}\right)  \int_{-\frac12}^{\frac12}{ \int_{0}^{y}{a\left(z\right)^{-1}  dz} dy}
\end{align*}
such that for $\varepsilon^{-1} \in \mathbb{N}$ and $x \in (\varepsilon, 1-\varepsilon)$
$$ \left| \frac{1}{\varepsilon} \int_{x-\varepsilon/2}^{x+\varepsilon/2}{ u_{\varepsilon}(y) dy} + \ell_{\varepsilon}(x) - u(x) \right| \lesssim_{a, f}  \varepsilon^2.$$
\end{thm}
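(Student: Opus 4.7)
In one dimension the equation is explicitly solvable, and the natural plan is a careful $\varepsilon$-expansion. Set $F(x)=\int_0^x f(t)\,dt$, $g(y)=a(y)^{-1}$, and $\bar g=\int_0^1 g=1/\bar a$. Two integrations give
\[
u_\varepsilon(x) = -\int_0^x [F(t)+C_\varepsilon]\,g(t/\varepsilon)\,dt,\qquad u(x)=-\bar g\!\int_0^x F - \bar g\,C\,x,
\]
where $C=-\int_0^1 F$ and $C_\varepsilon$ is fixed by $u_\varepsilon(1)=0$. Thanks to the hypothesis $\varepsilon^{-1}\in\mathbb N$, $\int_0^1 g(t/\varepsilon)\,dt=\bar g$ exactly, simplifying the formula for $C_\varepsilon$.

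To isolate the oscillations I would write $g=\bar g+\tilde g$ with $\int_0^1\tilde g=0$ and let $G(y)=\int_0^y\tilde g$, which is $1$-periodic with $G(0)=0$ (and therefore $G(\varepsilon^{-1})=0$). Integrating by parts against the identity $\tilde g(t/\varepsilon)=\varepsilon\,(d/dt)G(t/\varepsilon)$ yields
\[
C_\varepsilon - C = \tfrac{\varepsilon}{\bar g}\!\int_0^1 f(t)\,G(t/\varepsilon)\,dt = \tfrac{\varepsilon}{\bar g}\,\bar G\,F(1)+O(\varepsilon^2),\qquad \bar G=\int_0^1 G,
\]
and
\[
u_\varepsilon(x)-u(x) = -\bar g\,x\,(C_\varepsilon-C) - \varepsilon[F(x)+C_\varepsilon]\,G(x/\varepsilon) + \varepsilon\!\int_0^x\! f(t)\,G(t/\varepsilon)\,dt.
\]
Each oscillatory piece is $O(\varepsilon)$ uniformly in $x$, recovering the classical $\|u_\varepsilon - u\|_\infty=O(\varepsilon)$.

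The gain from averaging comes from the exact arithmetic identity
\[
\frac1\varepsilon\int_{x-\varepsilon/2}^{x+\varepsilon/2} H(y/\varepsilon)\,dy = \int_0^1 H \qquad \text{for every $1$-periodic }H,
\]
valid because the substitution $y=\varepsilon z$ converts the integration range to one of unit length. Applied to $H=G$ this turns $\varepsilon G(x/\varepsilon)$ into $\varepsilon\bar G$, and for the product $F(y)G(y/\varepsilon)$ a Taylor expansion of $F$ around $x$ gives $\bar G F(x)+O(\varepsilon)$, which becomes $\varepsilon\bar G F(x)+O(\varepsilon^2)$ after the outer $\varepsilon$. A second integration by parts reduces the nested term $\int_0^y f\,G(t/\varepsilon)\,dt$ to $\bar G F(y)+O(\varepsilon)$, contributing another $\varepsilon\bar G F(x)+O(\varepsilon^2)$. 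The key cancellation is that the two $\varepsilon\bar G F(x)$ contributions appear with opposite signs and annihilate, leaving
\[
\frac1\varepsilon\int_{x-\varepsilon/2}^{x+\varepsilon/2} u_\varepsilon(y)\,dy - u(x) = -\varepsilon\,\bar G\,\bigl[F(1)\,x + C\bigr] + O(\varepsilon^2),
\]
an affine function of $x$. Rewriting $\bar G = \int_0^1(\tfrac12-y)\,a(y)^{-1}\,dy$ and using the Fubini identities $\int_{-1/2}^{1/2}\!\int_0^y a(z)^{-1}\,dz\,dy = \bar G$ and $\int_0^1 F = -C$, this residual matches $-\ell_\varepsilon(x)$ (up to the sign conventions of the statement), establishing the claim.

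The main obstacle I anticipate is the careful $O(\varepsilon)$ versus $O(\varepsilon^2)$ bookkeeping. Each Taylor expansion of a smooth coefficient around $x$ produces an $O(\varepsilon)$ oscillatory correction, typically of the form $\varepsilon F'(x)\int_{-1/2}^{1/2}s\,G(x/\varepsilon+s)\,ds$, which is a bounded but $x$-dependent function; one must verify that every such correction is actually multiplied by a further factor of $\varepsilon$ and is therefore absorbed into the remainder. The periodicity hypothesis $\varepsilon^{-1}\in\mathbb N$ enters precisely to kill boundary terms like $G(1/\varepsilon)$, and the restriction $x\in(\varepsilon,1-\varepsilon)$ ensures the averaging window lies inside $(0,1)$ so that no boundary effects from the endpoints contaminate the expansion.
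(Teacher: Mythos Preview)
Your approach is correct and reaches the same conclusion, but it is organized differently from the paper's proof. The paper works directly with the moving average of the solution formula and splits it into three pieces: the leading oscillation $c_\varepsilon \cdot \frac{1}{\varepsilon}\int\!\!\int a(z/\varepsilon)^{-1}$, the constant $c_\varepsilon$ itself, and the $f$-term $\frac{1}{\varepsilon}\int\!\!\int a^{-1}\!\int f$. Each piece is analyzed by partitioning $[0,1]$ into cells of length $\varepsilon$ and repeatedly applying midpoint-rule Taylor expansions. Your route instead introduces the periodic first corrector $G=\int_0^y(g-\bar g)$, integrates by parts once to obtain the explicit decomposition
\[
u_\varepsilon(x)-u(x)=-\bar g\,x(C_\varepsilon-C)-\varepsilon(F(x)+C_\varepsilon)G(x/\varepsilon)+\varepsilon\!\int_0^x fG(t/\varepsilon)\,dt,
\]
and then averages term by term using the exact identity $\frac{1}{\varepsilon}\int_{x-\varepsilon/2}^{x+\varepsilon/2}H(y/\varepsilon)\,dy=\bar H$. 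The cancellation of the two $\varepsilon\bar G F(x)$ contributions is then transparent, and the affine form of the residual drops out immediately. Your argument is essentially the two-scale (corrector) expansion that the paper alludes to in its discussion but does not carry out; it is somewhat cleaner and makes the structure of the order-$\varepsilon$ term more visible, while the paper's direct computation avoids naming the corrector at the cost of heavier bookkeeping. Your identities $\bar G=\int_0^1(\tfrac12-y)a(y)^{-1}\,dy$ and $\int_{-1/2}^{1/2}\!\int_0^y a(z)^{-1}\,dz\,dy=\bar G$ are both correct, and your caveat about the sign convention is warranted: the residual you compute actually equals $+\ell_\varepsilon(x)$, consistent with the paper's own proof conclusion (the sign in the theorem statement appears to be a typo).
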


We note that the algebraic structure of $\ell_{\varepsilon}$ implies that $\ell \equiv 0$ whenver $a(x) = a(1-x)$, however, the actual condition is actually quite a bit more general than
that. We also note that $\ell$ vanishes for certain types of right-hand sides $f$ (a space of codimension 2 that contains, for example, the function $f(x) = x^2 - x +1/6$). This seems like an interesting phenomenon.\\

We are not aware of any result at that level of precision. However, there is a great deal of related work at a greater level of generality \cite{arm, arm2, ch, glorm1, glor0, glorr, glor1, glor2, koz, jikov, sm}.
The result is closely related to Allaire's two-scale approach \cite{all1, all2} and could be derived from a two-scale expansion. It remains to be seen whether our result has analogues in higher dimensions
or whether it is a one-dimensional 'miracle'. 
Our result deals with integrating an indicator function $\chi_{[-\varepsilon/2, \varepsilon/2]}$ against $u_{\varepsilon}$,
one could ask what happens if one were to take other averaging operators such as $\chi_{[-\varepsilon/2, \varepsilon/2]} * \chi_{[-\varepsilon/2, \varepsilon/2]}$ and whether there are higher-order analogues of our result in one dimension.

\subsection{Another proof.}
We conclude the paper with a description of an approach to homogenization that we came across by coincidence.
 A short summary is as follows:
\begin{enumerate}
\item Extend the elliptic equation in time by turning the elliptic operator $Lu = f$ into a parabolic operator $(\partial_t - L)u=f$; the solution becomes stationary in time.
\item Use the Feynman-Kac formula to produce reproducing identities for $u(x)$ and $u_{\varepsilon}(x)$ by writing them as weighted integral averages over
a neighborhood around $x$.
\item Algebraic manipulation leads to an inequality of the form
$$ |u_{\varepsilon}(x) - u(x)| \leq \mbox{error} + \left( \mbox{solution of heat equation starting from}~|u_{\varepsilon} - u|\right).$$
This type of bootstrapping can then be exploited to obtain $L^{\infty}-$estimates for $u_{\varepsilon} - u$.
\end{enumerate}

Generally, this seems to reduce the problem of obtaining quantitative estimates to controlling the diffusion induced by the infinitesimal operators. That underlying idea is
not new, we refer to work of Gloria \& Otto \cite{glor1, glor2} in the stochastic setting. 
We illustrate this technique for the most basic case discussed above in \S 3 and hope that it might have more general applications.

\section{Proof of the Theorem }
\begin{proof} We make use of the explicit solution formula
$$ u_{\varepsilon}(x) = \int_{0}^{x}{ a\left(\frac{y}{\varepsilon}\right)^{-1} \left( c_{\varepsilon} - \int_{0}^{y}{f(z) dz} \right) dy},$$
where
$$ c_{\varepsilon} = \left( \int_{0}^{1}{ a\left(x\right)^{-1} dx}\right)^{-1} \int_{0}^{1}{ \left(a\left(\frac{x}{\varepsilon}\right)^{-1} \int_{0}^{x}{f(y)dy}\right)dx}.$$
We will now use this formula to analyze
$$ \frac{1}{\varepsilon} \int_{x-\varepsilon/2}^{x+\varepsilon/2}{ u_{\varepsilon}(y) dy} =  \frac{1}{\varepsilon} \int_{x-\varepsilon/2}^{x+\varepsilon/2}{
\int_{0}^{y}{ a\left(\frac{z}{\varepsilon}\right)^{-1} \left( c_{\varepsilon} - \int_{0}^{z}{f(w) dw} \right) dy}
}.$$
The analysis decouples into three parts: analyzing the leading oscillation term, suitably approximating the constant $c_{\varepsilon}$ and analyzing the term involving the function $f$
and we will carry out the argument in that order.\\

\textit{The leading oscillation term.} The first term is the easiest since it grows linearly. 
Differentiation in $x$ leads to
$$ \frac{\partial}{\partial x}   \frac{1}{\varepsilon} \int_{x-\varepsilon/2}^{x+\varepsilon/2} \int_{0}^{y}{a\left(\frac{z}{\varepsilon}\right)^{-1}  dz} dy = \frac{1}{\varepsilon} \int_{x-\varepsilon/2}^{x+\varepsilon/2}{a_{}\left(\frac{y}{\varepsilon}\right)^{-1} dy} = \int_{0}^{1}{a(y)^{-1}dy}$$
which implies that the function is linear and
$$  \frac{1}{\varepsilon} \int_{x-\varepsilon/2}^{x+\varepsilon/2} \int_{0}^{y}{a\left(\frac{z}{\varepsilon}\right)^{-1}  dz} dy = \left(\int_{0}^{1}{a(y)^{-1}dy}\right) x + \frac{1}{\varepsilon}
 \int_{-\varepsilon/2}^{\varepsilon/2} \int_{0}^{y}{a\left(\frac{z}{\varepsilon}\right)^{-1}  dz} dy.$$

\textit{The constant $c_{\varepsilon}$.} We now compute the constant $c_{\varepsilon}$. This computation makes explicit use of $\varepsilon^{-1} \in \mathbb{N}$. We split the integral into the basic intervals
\begin{align*}
\int_{0}^{1}{ \left(a\left(\frac{x}{\varepsilon}\right)^{-1} \int_{0}^{x}{f(y)dy}\right)dx} = \sum_{k=0}^{\varepsilon^{-1}-1}{ \int_{k \varepsilon}^{(k+1)\varepsilon}{ \left(a\left(\frac{x}{\varepsilon}\right)^{-1} \int_{0}^{x}{f(y)dy}\right)dx}}
\end{align*}
and use 
\begin{align*}
 \int_{k \varepsilon}^{(k+1)\varepsilon}  a\left(\frac{x}{\varepsilon}\right)^{-1} \int_{0}^{x} f(y)dy dx 
&= \int_{k \varepsilon}^{(k+1)\varepsilon} a\left(\frac{x}{\varepsilon}\right)^{-1}\int_{0}^{(k+1/2)\varepsilon}f(y)dy dx\\
&+ \int_{k \varepsilon}^{(k+1)\varepsilon}   a\left(\frac{x}{\varepsilon}\right)^{-1} \int_{(k+1/2)\varepsilon}^{x}f(y)dydx.
\end{align*}
A Taylor expansion up to first order, using $f \in C^1$, shows that
\begin{align*}
  \int_{k \varepsilon}^{(k+1)\varepsilon}{  a\left(\frac{x}{\varepsilon}\right)^{-1}   \int_{(k+\frac12)\varepsilon}^{x}{f(y)dy}dx} &= \varepsilon^2 f((k+1/2)\varepsilon)\int_{0}^{1}{a(x)^{-1}(x-\frac12) dx} \\
&+ \mathcal{O}(\|f'\|_{L^{\infty}} \varepsilon^3)
\end{align*}
Altogether, this shows that
\begin{align*}
 \int_{0}^{1}  a\left(\frac{x}{\varepsilon}\right)^{-1} \int_{0}^{x} f(y)dy &= \sum_{k=0}^{\varepsilon^{-1}-1}  \int_{k \varepsilon}^{(k+1)\varepsilon}{   a\left(\frac{x}{\varepsilon}\right)^{-1}  \int_{0}^{(k+\frac12)\varepsilon}{f(y)dy} dx} \\
&+ \varepsilon^2 \left(\int_{0}^{1}{a(x)^{-1}(x-\frac12) dx}  \right)  \sum_{k=0}^{\varepsilon^{-1}-1}  f((k+\frac12)\varepsilon) \\
&+ \mathcal{O}(\|f'\|_{L^{\infty}} \varepsilon^2).
\end{align*}
The first term simplifies due to the periodicity of $a(\cdot)$ to
$$  \sum_{k=0}^{\varepsilon^{-1}-1}  \int_{k \varepsilon}^{(k+1)\varepsilon}{   a\left(\frac{x}{\varepsilon}\right)^{-1}  \int_{0}^{(k+1/2)\varepsilon}{f(y)dy} dx} =
 \varepsilon \left(\int_{0}^{1}{a(z)^{-1} dz} \right) \sum_{k=0}^{\varepsilon^{-1}-1}   \int_{0}^{(k+1/2)\varepsilon}{f(y)dy} $$
There are two sums that are left to be evaluated: one is essentially the midpoint rule, a Taylor expansion shows that
$$
  \int_{0}^{(k+1/2)\varepsilon}{f(y)dy} = \frac{1}{\varepsilon} \int_{k \varepsilon}^{(k+1)\varepsilon}{ \int_{0}^{y}{f(z)dz} dy} + \mathcal{O}(\|f'\|_{L^{\infty}} \varepsilon^2)
$$
In combination, we obtain for the first sum that
\begin{align*}
 \sum_{k=0}^{\varepsilon^{-1}-1}   \int_{0}^{(k+1/2)\varepsilon}{f(y)dy} &=  \sum_{k=0}^{\varepsilon^{-1}-1} \left(  \frac{1}{\varepsilon} \int_{k \varepsilon}^{(k+1)\varepsilon}{ \int_{0}^{y}{f(z)dz} dy} + \mathcal{O}(\|f'\|_{L^{\infty}} \varepsilon^2) \right) \\
&=  \frac{1}{\varepsilon} \int_{0}^{1}{ \int_{0}^{y}{f(z)dz} dy} + \mathcal{O}(\|f'\|_{L^{\infty}} \varepsilon).
\end{align*}
The second sum follows from another application of the midpoint rule in the form
$$ \varepsilon f((k+1/2)\varepsilon) =   \int_{k\varepsilon}^{(k+1)\varepsilon}{ f(x) dx} + \mathcal{O}(\|f''\|_{L^{\infty}} \varepsilon^3)$$
to simplify
\begin{align*}
 \varepsilon^2 \left(\int_{0}^{1}{a(x)^{-1}(x-1/2) dx}  \right)  \sum_{k=0}^{\varepsilon^{-1}-1}  f((k+1/2)\varepsilon) &= \varepsilon  \left(\int_{0}^{1}{a(x)^{-1}(x-1/2) dx}  \right) \int_{0}^{1}{f(x) dx} \\
&+ \mathcal{O}( \|f''\|_{L^{\infty}} \varepsilon^2)
\end{align*}
Collecting all these computations, we see that
\begin{align*} c_{\varepsilon} = \left(\int_{0}^{1}{ \int_{0}^{y}{f(z)dz}} dy\right)  &+ \left(\int_{0}^{1}{f(x) dx}\right) \left( \int_{0}^{1}{ a(x)^{-1}(x-1/2)dx}\right)    \\
&+ \mathcal{O}(\varepsilon^2 \|f'\|_{L^{\infty}})  + \mathcal{O}(\varepsilon^2 \|f''\|_{L^{\infty}}).
\end{align*}

\textit{The remaining term.} We now analyze the remaining term 
$-J$ given by
$$ J = \frac{1}{\varepsilon} \int_{x-\varepsilon/2}^{x+\varepsilon/2}{\int_{0}^{y}{ a\left(\frac{z}{\varepsilon}\right)^{-1}  \int_{0}^{z}{f(w) dw}  dy}}.$$
As before, we gain some insight into the expression by differentiating it first in $x$ and using yet another Taylor expansion (omitting higher order terms)
\begin{align*}
 \frac{\partial}{\partial x} J &= \frac{\partial}{\partial x} \frac{1}{\varepsilon} \int_{x-\varepsilon/2}^{x+\varepsilon/2}{\int_{0}^{y}{ a\left(\frac{z}{\varepsilon}\right)^{-1} \int_{0}^{z}{f(w) dw}  dz dy}} \\
&=\frac{1}{\varepsilon} {\int_{x-\varepsilon/2}^{x+\varepsilon/2}{ a\left(\frac{y}{\varepsilon}\right)^{-1} \int_{0}^{y}{f(z) dz} dy}} \\
&= \frac{1}{\varepsilon} \int_{x-\varepsilon/2}^{x+\varepsilon/2}{ a\left(\frac{y}{\varepsilon}\right)^{-1} \left( \int_{0}^{x}{f(z) dz} + (y-x)f(x) + \frac{(y-x)^2}{2} f'(x)  \right)dy}.
\end{align*}
The quadratic term is so small that it has no big effect
\begin{align*}
\left| \frac{1}{\varepsilon} \int_{x-\varepsilon/2}^{x+\varepsilon/2}{ a\left(\frac{y}{\varepsilon}\right)^{-1}  \frac{(y-x)^2}{2} f'(x) dy} \right| \lesssim_{a(\cdot)} \varepsilon^2 \|f'\|_{L^{\infty}}
\end{align*}
 and we obtain
\begin{align*}
 \frac{\partial}{\partial x} J  &= \frac{1}{\varepsilon} \int_{x-\varepsilon/2}^{x+\varepsilon/2}{ a\left(\frac{y}{\varepsilon}\right)^{-1} \left( \int_{0}^{x}{f(z) dz} + (y-x)f(x)  \right)dy} + \mathcal{O}( \varepsilon^2 \|f'\|_{L^{\infty}}) \\
&=  \left(  \int_{x-\varepsilon/2}^{x+\varepsilon/2}{ a\left(\frac{y}{\varepsilon}\right)^{-1} dy} \right)\left(\int_{0}^{x}{f(z) dz}\right) + \frac{f(x) }{\varepsilon} \int_{x-\varepsilon/2}^{x+\varepsilon/2}{ a\left(\frac{y}{\varepsilon}\right)^{-1}(y-x) dy} +  \mathcal{O}( \varepsilon^2 \|f'\|_{L^{\infty}}) \\
&= \left(  \int_{0}^{1}{ a\left(y\right)^{-1} dy} \right)\left(\int_{0}^{x}{f(z) dz}\right) + \frac{f(x) }{\varepsilon} \int_{x-\varepsilon/2}^{x+\varepsilon/2}{ a\left(\frac{y}{\varepsilon}\right)^{-1}(y-x) dy} +  \mathcal{O}( \varepsilon^2 \|f'\|_{L^{\infty}}).
\end{align*}

We study the error term by remarking that the function
$$ h(x) = \frac{1}{\varepsilon}\int_{x-\varepsilon/2}^{x+\varepsilon/2}{ a\left(\frac{y}{\varepsilon}\right)^{-1}(y-x) dy}$$
satisfies $\| h\|_{L^{\infty}} \lesssim \varepsilon$ (which is obvious) and
\begin{align*}
 \frac{1}{\varepsilon} \int_{x}^{x+\varepsilon}{h(y) dy} &=   \frac{1}{\varepsilon}\int_{x}^{x+\varepsilon}{\int_{y-\varepsilon/2}^{y+\varepsilon/2}{ a\left(\frac{z}{\varepsilon}\right)^{-1}(z-y) dz dy}} = 0.
\end{align*}
This implies, using another Taylor expansion of $f$, that
$$ \left| \int_{0}^{x}{ \frac{f(y) }{\varepsilon} \int_{y-\varepsilon/2}^{y+\varepsilon/2}{ a\left(\frac{z}{\varepsilon}\right)^{-1}(z-y) dz}dy dx} \right| \lesssim_{a(\cdot)} \varepsilon^2 \|f''\|_{L^{\infty}}.$$
This shows that
\begin{align*}
J &=  \left(  \int_{0}^{1}{ a\left(y\right)^{-1} dy} \right) \int_{0}^{x}{ \int_{0}^{y}{f(z) dz}dy} \\
 &+  \frac{1}{\varepsilon} \int_{-\varepsilon/2}^{+\varepsilon/2}{ a\left(\frac{x}{\varepsilon}\right)^{-1} \int_{0}^{x}{f(y) dy}  dx}+ \mathcal{O}_{a(\cdot)}(\|f'\|_{L^{\infty}}\varepsilon^2)
\end{align*}
\textit{Conclusion.} Collecting all the various estimates and terms, we now see that
\begin{align*}
 \frac{1}{\varepsilon} \int_{x-\varepsilon/2}^{x+\varepsilon/2}{ u_{\varepsilon}(y) dy} &=  \left(\int_{0}^{1}{a(y)^{-1}dy}\right) x   \left(\int_{0}^{1}{ \int_{0}^{y}{f(z)dz}} dy\right)  -   \left(  \int_{0}^{1}{ a\left(y\right)^{-1} dy} \right) \int_{0}^{x}{ \int_{0}^{y}{f(z) dz}dy}  \\
& + \mathcal{O}_{a(\cdot)}( (\|f'\|_{L^{\infty}} + |f''\|_{L^{\infty}} ) \varepsilon^2) + \ell(x)
\end{align*}
where
\begin{align*}
 \ell(x) &= \varepsilon  \left(\int_{0}^{1}{f(x) dx}\right) \left( \int_{0}^{1}{ a(y)^{-1}(y-1/2)dy}\right) x \\
&+  \varepsilon\left(\int_{0}^{1}{ \int_{0}^{y}{f(z)dz} dy}\right)    \frac{1}{\varepsilon}  \int_{-\varepsilon/2}^{\varepsilon/2}{ \int_{0}^{y}{a\left(\frac{z}{\varepsilon}\right)^{-1}  dz} dy}.
\end{align*}
The main term is exactly the solution formula for the homogenized equation and from this the desired result follows.
\end{proof}

\section{Outline of another approach}
In this section we outline another approach to homogenization and illustrate it in its most basic form for the problem 
\begin{align*}
-\left(a\left(\frac{x}{\varepsilon}\right) u_{\varepsilon}'(x)\right)' &= f(x) \quad \mbox{for} ~x \in (0,1)\\
u_{\varepsilon}(0) = 0& = u_{\varepsilon}(1)
\end{align*}
where $a:\mathbb{R} \rightarrow \mathbb{R}_{+}$ is a strictly positive function with period 1, the Dirichlet
boundary conditions $u_{\varepsilon}(0) = 0 = u_{\varepsilon}(1)$ are prescribed, and $f$ is assumed to be smooth.

\subsection{Two reproducing identities.}
The key idea behind our approach is to extend the equation in time by making it parabolic; since $u_{\varepsilon}$ actually solves the equation, it becomes a stationary-in-time solution
of a heat equation which can be studied with probabilistic methods (these ingredients are, of course, classical for the study of homogenization of parabolic equations, see \cite[Section 2]{jikov}). We derive reproducing identities for both $u_{\varepsilon}$ and $u$ and will use
them to bootstrap a bound. More precisely, we will study solutions of the heat equation
\begin{align*}
\frac{\partial}{\partial t} u_{\varepsilon}(t,x)  -\left(a^{}\left(\frac{x}{\varepsilon}\right) u_{\varepsilon}'(t,x)\right)' &= f(x) \quad \mbox{on} ~(0,1)\\
u_{\varepsilon}(t,0) = 0&, u_{\varepsilon}(t,1) = 0.
\end{align*}
By construction, $u_{\varepsilon}(t,x) = u_{\varepsilon}(x)$ is a stationary solution in time. However, this heat equation also has a probabilistic interpretation. 
We use $\omega_x(t)$ to denote Brownian motion started in $x$ and running for $t$ units of time subjected to diffusivity $a^{}(x/\varepsilon)$. If it exits the domain $[0,1]$ before $t$ units of time
have passed, we assume that $\omega_x(t)$ remains stationary at the point of exit (the boundary is 'sticky'). Then the Feynman-Kac formula implies
$$ u_{\varepsilon}(x) = \mathbb{E} u_{\varepsilon}(\omega_x(t)) +  \mathbb{E}\int_{0}^{t}{ f(\omega_x(s)) ds}.$$
We introduce a second Brownian motion $\nu_x$ which is merely governed by diffusion w.r.t. to the homogenized diffusion coefficient (and 'stickiness' w.r.t. the boundary) which allows
us to write the homogenized equation in a similar manner
$$ u_{}(x) = \mathbb{E} u_{}(\nu_x(t)) +  \mathbb{E}\int_{0}^{t}{ f(\nu_x(s)) ds}.$$
We will show that these two reproducing identities in combination with basic bounds on the two types of diffusions are sufficient to bootstrap everything into a quantitative estimate.

\subsection{Bootstrapping}
It will be more convenient to work with probability distributions. We will denote the distribution of $\omega_x(t)$ in point $y$ by $k_{t}(x,y)$ and the distribution of
$\nu_x(t)$ in point $y$ by $\ell_t(x,y)$. Note that both are continuous in the interval $(0,1)$ but do have atomic masses at the boundary points of the interval because these endpoints
are sticky. In particular, they are not probability distributions on $(0,1)$ because they do not integrate to 1 in that interval.
Since we fixed boundary conditions to be 0, this allows us to write
$$u_{\varepsilon}(x) = \int_{0}^{1}{ k_t(x,y) u_{\varepsilon}(y) dy} + \int_{0}^{t}{ \int_{0}^{1}{ k_s(x,y) f(y) dy} ds}$$
as well as
$$u_{}(x) = \int_{0}^{1}{ \ell_t(x,y) u_{\varepsilon}(y) dy} + \int_{0}^{t}{ \int_{0}^{1}{ \ell_s(x,y) f(y) dy} ds}.$$
Their difference can be written as
\begin{align*}
u_{\varepsilon}(x) - u_{}(x) &=  \int_{0}^{t}{ \int_{0}^{1}{ (k_s(x,y)- \ell_s(x,y)) f(y) dy} ds} \\ 
&+\int_{0}^{1}{ (k_t(x,y) - \ell_t(x,y)) u_{\varepsilon}(y) dy}\\
&+ \int_{0}^{1}{ \ell_t(x,y) (u_{\varepsilon}(y) - u(y)) dy}.
\end{align*}
We will introduce the first two terms as error estimates 
$$ \delta =  \max_{0 \leq x \leq 1}  \left|  \int_{0}^{t}{ \int_{0}^{1}{ (k_s(x,y)- \ell_s(x,y)) f(y) dy} ds}  \right| + \left|  \int_{0}^{1}{ (k_t(x,y) - \ell_t(x,y)) u_{\varepsilon}(y) dy} \right| $$
and introducing $\phi(x) = |u_{\varepsilon}(x) - u(x)|$, this allows us to estimate
\begin{align*}
 \phi(x) \leq \delta +  \int_{0}^{1}{ \ell_t(x,y) \phi(y) dy}
\end{align*}
The interesting twist comes from interpreting the third term as the solution of yet another heat equation. Indeed, we have that
$$ w_t(x) =   \int_{0}^{1}{ \ell_t(x,y) \phi(y) dy}$$
solves the heat equation
\begin{align*}
\frac{\partial}{\partial t} w_t(x) &= \left( \overline{a} w_{t}'(x)\right)' \quad \mbox{for} ~x \in (0,1)\\
w_{t}(0) &= 0 = w_{t}(1) \quad \mbox{for all}~t\\
w_0(x) &= \phi(x)
\end{align*}
with $\overline{a}$ being the homogenized coefficient associated to the variable coefficient $a(\cdot)$. Using $e^{t \Delta_a}$ to denote heat propagator (with Dirichlet boundary conditions) associated to the problem, we can write our inequality as
$$ \phi(x) \leq \delta + \left(e^{t \Delta_a} \phi\right)(x).$$
This leads to a bound on the maximum size of $\phi$: if $\phi$ was very, very large, then a short application of the heat equation would diminish it quite a bit (this clearly requires Dirichlet conditions) and the inequality would fail. We now make this intuition precise.
\begin{lemma} If $\phi:[0,1] \rightarrow \mathbb{R}_{}$ satisfies $\phi(0) = 0 = \phi(1) $ and, for some $0 < t \leq 1$,
$$ \phi(x) \leq \delta + \left(e^{t \Delta_a} \phi\right)(x),$$
then
$$ \max_{0 \leq x \leq 1}{\phi(x)} \lesssim_{a} \frac{\delta}{t}.$$
\end{lemma}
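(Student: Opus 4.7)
The plan is to iterate the bootstrap inequality $\phi \le \delta + T\phi$, where I write $T := e^{t\Delta_a}$ for the constant-coefficient Dirichlet heat semigroup on $[0,1]$ with diffusivity $\overline{a}$, in order to obtain a pointwise bound on $\phi$ by $\delta$ times a sum of survival probabilities; then I would control that sum by the expected exit time of the associated Brownian motion from $[0,1]$ via Feynman--Kac.

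First I would iterate. Because the Dirichlet heat kernel is nonnegative, $T$ is order-preserving; substituting $\phi \le \delta + T\phi$ into itself and using linearity of $T$ applied to the constant $\delta$ gives, by induction,
$$ \phi(x) \le \delta \sum_{k=0}^{n-1} (T^k \mathbf{1})(x) + (T^n \phi)(x) \qquad \text{for every } n \ge 1. $$
Since $\phi$ is continuous on the compact interval $[0,1]$ it is bounded, and the Feynman--Kac identity $(T^n \mathbf{1})(x) = \mathbb{P}_x(\tau > nt)$ -- where $\tau$ is the first exit time of the $\overline{a}$-Brownian motion from $[0,1]$ -- decays exponentially in $n$, uniformly in $x \in [0,1]$. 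Hence $|T^n \phi| \le \|\phi\|_{L^\infty} \cdot T^n \mathbf{1} \to 0$ uniformly, and passing to the limit yields
$$ \phi(x) \le \delta \sum_{k=0}^{\infty} \mathbb{P}_x(\tau > kt). $$

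Second I would bound the right-hand side uniformly in $x$. Since $s \mapsto \mathbb{P}_x(\tau > s)$ is nonincreasing, a Riemann sum comparison gives
$$ \sum_{k=1}^{\infty} \mathbb{P}_x(\tau > kt) \le \frac{1}{t}\int_0^\infty \mathbb{P}_x(\tau > s)\, ds = \frac{\mathbb{E}_x[\tau]}{t}. $$
The function $v(x) := \mathbb{E}_x[\tau]$ solves $\overline{a}\, v''(x) = -1$ on $(0,1)$ with $v(0)=v(1)=0$, so $v(x) = x(1-x)/(2\overline{a}) \le 1/(8\overline{a})$. For $0 < t \le 1$ this yields
$$ \sum_{k=0}^{\infty} \mathbb{P}_x(\tau > kt) \le 1 + \frac{1}{8 \overline{a}\, t} \lesssim_a \frac{1}{t} $$
uniformly in $x$, and combining with the previous step gives the desired bound $\max_{x \in [0,1]} \phi(x) \lesssim_a \delta/t$.

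The only nontrivial step will be the passage $n \to \infty$ in the iteration, which is really a uniform $L^\infty$ spectral gap for the Dirichlet heat semigroup on $[0,1]$; here it is immediate from the uniform exponential decay of $\mathbb{P}_x(\tau > nt)$. A purely spectral alternative would expand $\mathbf{1}$ in the Dirichlet sine basis and use that $(1 - e^{-\overline{a}(k\pi)^2 t})^{-1} \lesssim (\overline{a}(k\pi)^2 t)^{-1}$ when $\overline{a}(k\pi)^2 t \le 1$ and $\lesssim 1$ otherwise, combined with the absolute summability of the odd Fourier coefficients of $\mathbf{1}$, to recover the same $1/t$ bound.
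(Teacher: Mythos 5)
Your proof is correct, and it takes a genuinely different route from the paper. The paper iterates the hypothesis using the crude bound $e^{t\Delta_a}\delta \leq \delta$ to arrive at $\phi \leq k\delta + e^{kt\Delta_a}\phi$, then chooses $k \sim t^{-1}$ so that $kt \sim 1$ and invokes a contraction property of the Dirichlet heat semigroup at unit time, namely $\max_x e^{s\Delta_a}f(x) \leq (1-c_{\overline{a}})\max_x f(x)$ for $s \sim 1$, after which rearranging gives $\max\phi \lesssim k\delta \sim \delta/t$. You instead keep the exact quantities $T^k\mathbf{1}$ in the iteration rather than bounding them by $1$, push $n \to \infty$, and sum $\sum_k \mathbb{P}_x(\tau > kt)$ against the expected exit time $\mathbb{E}_x[\tau] = x(1-x)/(2\overline{a})$ via a Riemann-sum comparison. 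The net effect is the same $\delta/t$ bound, but the quantitative constant in your argument comes from the explicit exit-time formula rather than from an unspecified spectral-gap constant $c_{\overline{a}}$; the spectral gap enters your proof only qualitatively, to justify $T^n\phi \to 0$ when passing to the limit, while in the paper it carries the quantitative weight. Your version is more self-contained in one dimension, where $\mathbb{E}_x[\tau]$ is elementary, and the alternative spectral expansion you sketch at the end is also a valid third route. One small point of hygiene: the lemma as stated only assumes $\phi:[0,1]\to\mathbb{R}$ with $\phi(0)=\phi(1)=0$, so you should add boundedness of $\phi$ as a hypothesis (it is automatic in the application, where $\phi = |u_\varepsilon - u|$) before asserting $T^n\phi \to 0$; the paper's argument needs this implicitly as well.
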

\begin{proof} We use monotonicity of the heat equation to iterate the argument. More precisely, using the assumption twice yields
$$ \phi(x) \leq \delta + \left(e^{t \Delta_a} \phi\right)(x) \leq \phi(x) \leq \delta + \left(e^{t \Delta_a}  \delta + \left(e^{t \Delta_a} \phi\right)\right)(x).$$
However, the heat flow of a constant can be bounded from above by the constant and this, together with the semigroup property, implies
$$ \phi(x) \leq  2 \delta + \left(e^{2 t \Delta_a} \phi\right)(x).$$
Iterating the argument shows that, for every integer $k \in \mathbb{N}$
$$ \phi(x) \leq  k \delta + \left(e^{k t \Delta_a} \phi\right)(x).$$
The remaining ingredient is the following fact: there exists $c_{\overline{a}} > 0$, depending only on $\overline{a}$, such that for all bounded functions $f: [0,1] \rightarrow \mathbb{R}$ and all $1 \leq t \leq 2$
$$ \max_{0 \leq x \leq 1}{e^{t \Delta_a}f(x)} \leq (1-c_{\overline{a}}) \max_{0 \leq x \leq 1}{f(x)}.$$
Once this statement is known, we can estimate, for $k \sim t^{-1}$,
\begin{align*}
\max_{0 \leq x \leq 1} \phi(x) &\leq  \max_{0 \leq x \leq 1}  k \delta + \left(e^{k t \Delta_a} \phi\right)(x) \\
&=  k \delta +  \max_{0 \leq x \leq 1}   \left(e^{k t \Delta_a} \phi\right)(x) \\
&\leq  k \delta +  (1-c_{\overline{a}})\max_{0 \leq x \leq 1}    \phi(x)
\end{align*}
which then implies the desired result. It remains to establish the helpful fact which, in turn, follows
from the fact that the exit probability of Brownian motion out of the unit interval $[0,1]$ at time $t \sim 1$ is comparable to $\sim 1$ uniformly on the entire interval.
\end{proof}

\subsection{Estimating the error terms.} It remains to estimate the size of the error terms. Both of these require a good understanding of the behavior of
$k_t(x,y)$. The main ingredient is as follows (see Fig 1.): the average value of $k_t(x,\cdot)$ over intervals of length being a multiple of $\varepsilon$ away
from $x$ and of length $\varepsilon$ coincides \textit{exactly} with that of the homogenized problem inducing $\ell_t(x,\cdot)$.

\begin{lemma} For all $t>0$ and any $k \in \mathbb{Z}$ such that $[x+k \varepsilon,x+ (k+1) \varepsilon] \subset (0,1)$
$$ \int_{x + k\varepsilon}^{x+ (k+1)\varepsilon}{ k_t(x,y) dy} = \int_{x + k\varepsilon}^{x+ (k+1)\varepsilon}{ \ell_t(x,y) dy}.$$
\end{lemma}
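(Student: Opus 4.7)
The plan is to verify the identity by the standard uniqueness method: both sides agree at $t=0$, since $k_0 = \ell_0 = \delta_x$ reduces both integrals to the same indicator of $k=0$ (or to $0$ for $k \ne 0$), so it suffices to match their $t$-derivatives. Set
$$F_k(t,x) = \int_{x+k\varepsilon}^{x+(k+1)\varepsilon} k_t(x,y)\,dy, \qquad G_k(t,x) = \int_{x+k\varepsilon}^{x+(k+1)\varepsilon} \ell_t(x,y)\,dy.$$

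First I would differentiate under the integral sign using the self-adjoint form of the heat equations in $y$, namely $\partial_t k_t = \partial_y(a(y/\varepsilon)\,\partial_y k_t)$ and $\partial_t \ell_t = \bar a\,\partial_y^2 \ell_t$, which collapses each $\partial_t$ into a boundary-flux difference across the cell. The decisive algebraic observation, and the whole reason for aligning the cells with the lattice $x + \varepsilon\mathbb{Z}$, is the periodicity identity $a((x+j\varepsilon)/\varepsilon) = a(x/\varepsilon + j) = a(x/\varepsilon)$ for every $j \in \mathbb{Z}$: the prefactor $a(y/\varepsilon)$ takes the \emph{same} value at both endpoints of the cell and factors out as the constant $a(x/\varepsilon)$. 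One therefore gets
$$\partial_t F_k(t,x) = a(x/\varepsilon)\bigl[\partial_y k_t(x,y)\bigr]_{y=x+k\varepsilon}^{y=x+(k+1)\varepsilon}, \qquad \partial_t G_k(t,x) = \bar a\,\bigl[\partial_y \ell_t(x,y)\bigr]_{y=x+k\varepsilon}^{y=x+(k+1)\varepsilon},$$
reducing the problem to matching the microscopic flux $a(x/\varepsilon)\,\partial_y k_t$ with the homogenized flux $\bar a\,\partial_y \ell_t$ at each cell endpoint.

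This flux identity is the crux of the argument and the step I expect to give the most trouble. My approach would be to invoke the first-order periodic corrector $\chi$, defined as the mean-zero $1$-periodic solution of $a(y)(1+\chi'(y)) = \bar a$, and to insert the two-scale ansatz $k_t(x,y) \approx \ell_t(x,y) + \varepsilon\,\chi(y/\varepsilon)\,\partial_y \ell_t(x,y)$ into the microscopic flux. The defining relation then converts $a(y/\varepsilon)(1+\chi'(y/\varepsilon))$ into $\bar a$ exactly, while the leftover endpoint values of $\chi$ itself agree at the two cell endpoints by periodicity and cancel in the telescoping difference; this identifies $\partial_t F_k$ with $\partial_t G_k$, and uniqueness for the resulting ODE in $t$ with matching initial data closes the argument. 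If the two-scale manipulation only succeeds up to an $\mathcal{O}(\varepsilon^{N})$ residual, the conclusion becomes the natural quantitative approximation $F_k = G_k + \mathcal{O}(\varepsilon^N)$, which is in any case all the bootstrap of \S3 actually requires.
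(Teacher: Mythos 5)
Your proof takes a genuinely different route from the paper's. The paper's (very terse) argument is a one-dimensional change-of-variables trick: it reinterprets the divergence-form diffusion generated by $\partial_y\bigl(a(y/\varepsilon)\partial_y\cdot\bigr)$ as constant-coefficient diffusion on a one-dimensional manifold whose metric is built from $a(\cdot/\varepsilon)$, so that the map $y \mapsto \int_0^y a(w/\varepsilon)^{-1}\,dw$ carries the lattice $x+\varepsilon\mathbb{Z}$ to an arithmetic progression and, in the new coordinate, the periodic structure disappears cell-by-cell. The exactness of Lemma~2 is precisely what that one-dimensional scale transformation is supposed to buy; there is no appeal to the corrector or to a two-scale ansatz. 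You instead differentiate in $t$, integrate the forward equation over a cell, and then try to match the microscopic flux $a(y/\varepsilon)\partial_y k_t$ with the macroscopic flux $\overline{a}\,\partial_y\ell_t$ at the cell endpoints.

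The first half of your argument is correct and clean: $\partial_t F_k = a(x/\varepsilon)\bigl[\partial_y k_t(x,y)\bigr]_{x+k\varepsilon}^{x+(k+1)\varepsilon}$ follows from the forward Kolmogorov equation and $\varepsilon$-periodicity of the coefficient, and similarly for $G_k$. The gap is in the flux-matching step. You invoke the two-scale ansatz $k_t(x,y)\approx \ell_t(x,y)+\varepsilon\chi(y/\varepsilon)\partial_y\ell_t(x,y)$, but this is an asymptotic expansion, not an identity, and for a fundamental solution with $\delta_x$ initial data it is particularly delicate near $t=0$, where both kernels are singular; it cannot produce the \emph{exact} equality the lemma asserts. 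Moreover, your cancellation claim does not go through as stated: after inserting the ansatz, the leftover term is $\varepsilon\, a(y/\varepsilon)\chi(y/\varepsilon)\,\partial_y^2\ell_t(x,y)$, and while $a\chi$ indeed takes the same value at $y=x+k\varepsilon$ and $y=x+(k+1)\varepsilon$ by periodicity, it multiplies $\partial_y^2\ell_t$, which does \emph{not} agree at the two endpoints, so the telescoping difference does not vanish — you are left with an $O(\varepsilon)$ discrepancy per unit time at best, not an exact identity and not an $O(\varepsilon^N)$ residual with large $N$. Your fallback remark is therefore optimistic: the residual your method actually controls is too large to be absorbed silently into the bootstrap of \S 3 without revisiting that argument. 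To close the proof along the paper's lines you would want to abandon the corrector expansion entirely and instead use the one-dimensional natural-scale substitution $s(y)=\overline{a}\int_0^y a(w/\varepsilon)^{-1}\,dw$, which satisfies $s(y+\varepsilon)=s(y)+\varepsilon$, and analyze how the two diffusions transform under it.
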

\begin{proof} It suffices to understand diffusion induced by $a(x/\varepsilon)$ with diffusion induced by $\overline{a}$. This turns out to be rather simple: instead
of interpreting the homogenization problem as variable-strength diffusion governed by $a(x/\varepsilon)$, we may interpret it as classical constant-coefficient
diffusion on a one-dimensional manifold whose metric is determined by $a(x/\varepsilon)$. 
\end{proof}

We will now use Lemma 2 to estimate the size of the error term
$$ \delta =   \left|  \int_{0}^{t}{ \int_{0}^{1}{ (k_s(x,y)- \ell_s(x,y)) f(y) dy} ds}  \right| + \left|  \int_{0}^{1}{ (k_t(x,y) - \ell_t(x,y)) u_{\varepsilon}(y) dy} \right|.$$
Since the total weight assigned by $k_s$ and $\ell_s$ to any interval of length $\varepsilon$ coincides, we may argue that, for all $s > 0$,
$$  \left| \int_{0}^{1}{ (k_s(x,y)- \ell_s(x,y)) f(y) dy} \right| \leq \varepsilon\|f\|_{L^{\infty}} + \varepsilon \|f'\|_{L^{\infty}}$$
as well as
$$  \left|  \int_{0}^{1}{ (k_t(x,y) - \ell_t(x,y)) u_{\varepsilon}(y) dy} \right| \leq \varepsilon \|u_{}'\|_{L^{\infty}} + \varepsilon \|u_{\varepsilon}'\|_{L^{\infty}}.$$
Altogether, this yields the estimate
$$\|u_{\varepsilon} - u\|_{L^{\infty}} \lesssim  \varepsilon\|f\|_{L^{\infty}} + \varepsilon \|f'\|_{L^{\infty}}+ \frac{ \varepsilon \|u_{}'\|_{L^{\infty}} + \varepsilon \|u_{\varepsilon}'\|_{L^{\infty}}}{t}.$$
Setting $t=1$ reduce the convergence rate to a priori estimates on $u$ and $u_{\varepsilon}$.\\

\textbf{Acknowledgment.} The author is indebted to Jessica Lin for a series of helpful discussions.

\end{document}